\begin{document}
\newtheorem{theorem}{Theorem}[section]
\newtheorem{lemma}[theorem]{Lemma}
\newtheorem{claim}[theorem]{Claim}
\newtheorem{definition}[theorem]{Definition}
\newtheorem{conjecture}[theorem]{Conjecture}
\newtheorem{proposition}[theorem]{Proposition}
\newtheorem{algorithm}[theorem]{Algorithm}
\newtheorem{corollary}[theorem]{Corollary}
\newtheorem{observation}[theorem]{Observation}
\newtheorem{remark}[theorem]{Remark}
\newtheorem{problem}[theorem]{Open Problem}
\newcommand{\noin}{\noindent}
\newcommand{\ind}{\indent}
\newcommand{\al}{\alpha}
\newcommand{\om}{\omega}
\newcommand{\R}{{\mathbb R}}
\newcommand{\N}{{\mathbb N}}
\newcommand\eps{\varepsilon}
\newcommand{\E}{\mathbb E}
\newcommand{\Prob}{\mathbb{P}}
\newcommand{\pl}{\textrm{C}}
\newcommand{\dang}{\textrm{dang}}
\renewcommand{\labelenumi}{(\roman{enumi})}
\newcommand{\bc}{\bar c}
\newcommand{\G}{{\mathcal{G}}}
\newcommand{\D}{{\mathcal{D}}}
\newcommand{\expect}[1]{\E \left [ #1 \right ]}
\newcommand{\floor}[1]{\left \lfloor #1 \right \rfloor}
\newcommand{\ceil}[1]{\left \lceil #1 \right \rceil}
\newcommand{\of}[1]{\left( #1 \right)}
\newcommand{\set}[1]{\left\{ #1 \right\}}
\newcommand{\abs}[1]{\left| #1 \right|}
\newcommand{\angs}[1]{\left\langle #1 \right\rangle}
\newcommand{\sqbs}[1]{\left[ #1 \right]}
\newcommand{\sm}{\setminus}
\newcommand{\bfrac}[2]{\of{\frac{#1}{#2}}}
\renewcommand{\k}{\kappa}
\renewcommand{\l}{\ell}
\renewcommand{\b}{\beta}
\renewcommand{\a}{\alpha}
\renewcommand{\o}{\omega}
\newcommand{\Bin}{\textrm{Bin}}
\newcommand{\Cc}{{\mathcal{C}}}
\newcommand{\Zz}{{\mathcal{Z}}}
\newcommand{\Aa}{{\mathcal{A}}}
\newcommand{\Rr}{{\mathcal{R}}}
\newcommand{\Ee}{{\mathcal{E}}}
\newcommand{\jlo}{2j\frac{\log (n/j)}{\log j}}
\newcommand{\opoo}{(1+o(1))}
\renewcommand{\i}{\ell}

\title{Rainbow arborescence in random digraphs}

\author{Deepak Bal}
\address{Department of Mathematics, Miami University, Oxford, OH, 45056, U.S.A.}
\email{baldc@miamioh.edu}

\author{Patrick Bennett}
\address{Department of Computer Science, University of Toronto, Toronto, ON, Canada, M5S 3G4}
\email{patrickb@cs.toronto.edu}

\author{Colin Cooper}
\address{Department of Informatics, KingÕs College, University of London, London WC2R 2LS, UK}
\email{colin.cooper@kcl.ac.uk}

\author{Alan Frieze}
\address{Department of Mathematical Sciences, Carnegie Mellon University, 5000 Forbes Av., 15213, Pittsburgh, PA, U.S.A}
\thanks{The fourth author is supported in part by NSF grant DMS1362785}
\email{alan@random.math.cmu.edu}

\author{Pawe\l{} Pra\l{}at}
\address{Department of Mathematics, Ryerson University, Toronto, ON, Canada, M5B 2K3}
\thanks{The fifth author is supported in part by NSERC and Ryerson University}
\email{\texttt{pralat@ryerson.ca}}

\maketitle
\begin{abstract}
We consider the Erd\H{o}s-R\'enyi random directed graph process, which is a stochastic process that starts with $n$ vertices and no edges, and at each step adds one new directed edge chosen uniformly at random from the set of missing edges. Let $\D(n,m)$ be a graph with $m$ edges obtained after $m$ steps of this process. Each edge $e_i$ ($i=1,2,\ldots, m$) of $\D(n,m)$ independently chooses a colour, taken uniformly at random from a given set of $n(1 + O( \log \log n / \log n)) = n (1+o(1))$ colours. We stop the process prematurely at time $M$ when the following two events hold: $\D(n,M)$ has at most one vertex that has in-degree zero and there are at least $n-1$ distinct colours introduced ($M= n(n-1)$ if at the time when all edges are present there are still less than $n-1$ colours introduced; however, this does not happen asymptotically almost surely). The question addressed in this paper is whether $\D(n,M)$ has a rainbow arborescence (that is, a directed, rooted tree on $n$ vertices in which all edges point away from the root and all the edges are different colours). Clearly, both properties are necessary for the desired tree to exist and we show that, asymptotically almost surely, the answer to this question is ``yes''.
\end{abstract}

\section{Introduction and the main result}
A set of edges $S$ in an edge-coloured graph is said to be \textbf{rainbow coloured} if every edge in $S$ has a different colour. There has recently been a deal of research on the existence of rainbow coloured objects in random graphs. The existence of rainbow coloured Hamilton cycles is studied in Cooper and Frieze~\cite{CF} and in Frieze and Loh~\cite{FL}. In the latter paper, it is shown that if $G_{n,m}$, a random graph on $n$ vertices and $m=(\frac{1}{2}+o(1)) n\log n$ edges, is randomly coloured with $n+o(n)$ colours, then a.a.s.\ there is a rainbow Hamilton cycle.\footnote{All asymptotics throughout are as $n \rightarrow \infty$ (we emphasize that the notations $o(\cdot)$ and $O(\cdot)$ refer to functions of $n$, not necessarily positive, whose growth is bounded). We say that an event in a probability space holds \textbf{asymptotically almost surely} (or \textbf{a.a.s.}) if the probability that it holds tends to $1$ as $n$ goes to infinity.} In a related paper, Bal and Frieze~\cite{BF} showed that if $m=Kn\log n$ and  if $G_{n,m}$ is randomly coloured with exactly $n$ colours then a.a.s.\ there is a rainbow Hamilton cycle. Janson and Wormald~\cite{JW} showed that if a random $2r$ regular graph $\G_{n,2r}$ on vertex set $[n]$ is randomly coloured so that each colour is used exactly $r$ times then a.a.s.\ $\G_{n,2r}$ contains a rainbow Hamilton cycle. More recently, Ferber, Kronenberg, Mousset and Shikhelman~\cite{FKMS} showed that if $np\gg \log n$ and the binomial random graph $G_{n,p}$ is randomly edge-coloured using $O(n)$ colours then a.a.s.\ $G_{n,p}$ contains $\Omega(np)$ edge disjoint rainbow Hamilton cycles.

\medskip

Hamilton cycles are just one example of a well-studied spanning subgraph. Frieze and McKay \cite{FM} considered the following process. Let $e_1,e_2,\ldots,e_N,N=\binom{n}{2}$ be a random ordering of the edges of the complete graph $K_n$. Let $G_m=([n],E_m=\set{e_1,e_2,\ldots,e_m})$. Suppose that the edges of $K_n$ are randomly coloured from a set of $cn,c\geq 1$ colours. Then let 
\begin{align*}
m_1&=\min\set{m:G_m\text{ is connected}}.\\
m_2&=\min\set{m:E_m\text{ contains edges of $n-1$ different colours}}.\\
m_3&=\min\set{m:G_m\text{ contains a rainbow spanning tree}}.
\end{align*}
It is clear that $m_3\geq \max\set{m_1,m_2}$ and \cite{FM} shows that a.a.s.\ $m_3=\max\set{m_1,m_2}$. Bal, Bennett, Frieze and Pralat~\cite{BBFP} proved a similar tight result for the case where each edge receives a choice of two random colours.

\bigskip

In this paper we turn our attention to a directed analogue of the result of \cite{FM}. We consider a \textbf{random digraph process}, which is a stochastic process that starts with $n$ vertices and no edges, and at each step adds one new (directed) edge chosen uniformly at random from the set of missing edges. Formally, let $N=n(n-1)$ and let $e_1, e_2, \ldots, e_N$ be a random permutation of the edges of the complete digraph $\vec{K}_n$. The graph process consists of the sequence of random digraphs $( \D(n,m) )_{m=0}^{N}$, where $\D(n,m)=(V,E_m)$, $V = [n] := \{1, 2, \ldots, n\}$, and $E_m = \{ e_1, e_2, \ldots, e_m \}$. It is clear that $\D(n,m)$ is a digraph taken uniformly at random from the set of all digraphs on $n$ vertices and $m$ edges. (See, for example,~\cite{bol, JLR} for more details.)

Let 
$$\eps = \eps(n) = \frac{\log \log n}{\log n}.$$
Suppose that each edge $e_i$ ($i=1,2,\ldots, m$) of $\D(n,m)$ is independently assigned a colour, uniformly at random, from a given set $W$ of $(1+50\eps)n = (1+o(1))n$ colours. 

We often write $\D(n,m)$ when we mean a graph drawn from the distribution $\D(n,m)$.  

\bigskip

An \textbf{arborescence} is a digraph in which, for a vertex $u$ called the \textbf{root} and any other vertex $v$, there is exactly one directed path from $u$ to $v$. Equivalently, an arborescence is a directed, rooted tree in which all edges point away from the root. Every arborescence is a directed acyclic graph (DAG), but not every DAG is an arborescence. A set of edges $S$ is said to be \textbf{rainbow} if each edge of $S$ is in a different colour. An arborescence is said to be rainbow if its edge set is. We are concerned with the following four events: 
\begin{eqnarray*}
\Cc_m &=& \{ \text{$\D(n,m)$ contains edges in at least $n-1$ colours} \},\\
\Zz_m &=& \{ \text{At most one vertex of $\D(n,m)$ has in-degree zero} \}, \\
\Aa_m &=& \{ \text{$\D(n,m)$ has an arborescence} \},\\
\Rr_m &=& \{ \text{$\D(n,m)$ has a rainbow arborescence} \}.
\end{eqnarray*}
Let $\Ee_m$ stand for one of the above four sequences of events and let 
$$
m_{\Ee} = \min\{m \in \N : \Ee_m \text{ occurs}\},
$$
provided that such an $m$ exists. (Note that $m_{\Zz}$ and $m_{\Aa}$ are always defined but the other two might not be.) It is obvious that $m_{\Aa} \ge m_{\Zz}$. Moreover, if $m_{\Rr}$ is defined, then so is $m_{\Cc}$ and clearly
$$
m_{\Rr} \ge \max\{m_{\Aa}, m_{\Cc} \} \ge m_{\Zz}.
$$ 

It is known (and easy to show using, say, the Brun's sieve---see, for example, Section 8.3 in~\cite{AS}) that the sharp threshold for property $\Zz$ is $n \log n$; in fact, the random variable counting the number of vertices of in-degree zero tends to the Poisson random variable with  expectation $e^{-c}$, provided that $m = n (\log n + c), c \in \R$, and so
\begin{equation}\label{eq:thr_in-degree}
\Prob ( \Zz_m ) = \of{1+e^{-c}+o(1)} e^{-e^{-c}}.
\end{equation}

Moreover, given other ``hitting time'' results in random graph processes (see, for example ~\cite{bol84, bt85} or Chapters 7 and 8 in \cite{bol}) it is natural to expect that a.a.s.\ $m_{\Aa} = m_{\Zz}$. To the best of our knowledge, this result is not published anywhere, but is implied by our result. Moreover, it follows from the coupon collector problem that, say, a.a.s.\ $m_{\Cc} < \frac n2 \log n$ and so the desired condition for the number of distinct colours present is satisfied much earlier. Indeed, the expected number of colours not present at time $\frac n2 \log n$ is equal to
$$
(1+50\eps) n \left( 1 - \frac {1}{(1+50\eps)n} \right)^{\frac n2 \log n} < 2 n \exp \left( - \frac {\log n}{2(1+50\eps)} \right) = n^{1/2+o(1)}.
$$
Hence, a.a.s.\ at most $n^{2/3} \le 50 \eps n$ colours are missing at that point of the process.

\bigskip

In this paper, we show the following result. 
\begin{theorem}\label{thm:main}
We have that a.a.s.
$$
m_{\Rr} = m_{\Aa} = m_{\Zz}.
$$ 
\end{theorem}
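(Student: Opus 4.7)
The plan is to prove $m_\Rr \le m_\Zz$ a.a.s., since the reverse chain $m_\Zz \le m_\Aa \le m_\Rr$ is immediate from the definitions and from the fact that any rainbow arborescence is an arborescence. Combined with the a.a.s.\ inequality $m_\Cc < m_\Zz$ already noted in the introduction, this will give all three equalities at once. Because the edge and colour sequences are independent, I would first expose the digraph process up to time $m_\Zz$, let $r$ denote the unique vertex of in-degree zero, and only then reveal the colours.

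In the digraph stage, standard first- and second-moment calculations together with $m_\Zz = n\log n + O(n)$ a.a.s.\ should establish the following properties of $\D(n,m_\Zz)$ a.a.s.: (i) for every nonempty $S \subseteq V \sm \{r\}$ there is an edge from $V \sm S$ into $S$, so that an arborescence rooted at $r$ already exists -- yielding $m_\Aa = m_\Zz$ as a byproduct; (ii) the set $V_1$ of non-root vertices of in-degree exactly one satisfies $|V_1| = O(\log n)$ and its in-edges have pairwise distinct tails; (iii) every in-degree is $O(\log n)$; and (iv) a mild expansion: for every not-too-large $S \subseteq V \sm \{r\}$, the number of edges from $V \sm S$ into $S$ is $\Omega(|S|\log n)$. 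The most delicate case is (i) at $|S|=2$, where the naive first moment is only $O(1)$: one must use the conditioning on $\Zz_{m_\Zz}$ to rule out the case where one of the two vertices equals $r$, and then observe that in the remaining bad event both edges inside the pair must be present, which pushes the expected count down to $o(1)$.

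Next I would reveal the colours. For each $v \in V_1$ the unique in-edge $e_v$ is forced, and since $|V_1| = O(\log n)$, a union bound shows these colours are pairwise distinct a.a.s. For the remaining non-root vertices I would set up the bipartite graph $B$ with parts $V \sm (V_1 \cup \{r\})$ and $W \sm \{c(e_v) : v \in V_1\}$, placing an edge $(v,c) \in B$ whenever some in-edge of $v$ has colour $c$. Using (iii)--(iv) together with the colour slack $(1+50\eps)n - (n-1) = \Theta(n\log\log n/\log n)$, one verifies Hall's condition comfortably and obtains a rainbow assignment $f(v)$ for each such $v$. The edge set $F := \{e_v : v \in V_1\} \cup \{f(v) : v \notin V_1 \cup \{r\}\}$ then consists of $n-1$ edges of pairwise distinct colours, one in-edge per non-root vertex; the only possible obstruction to $F$ being an arborescence is a directed cycle.

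The main obstacle, and the heart of the argument, is reconciling the rainbow constraint with acyclicity, which abstractly is a three-matroid intersection problem (head partition, colour partition, and graphic matroid). My plan is an iterative swap. If $F$ contains a directed cycle $C$, pick $v \in C$ of in-degree at least two -- such a $v$ exists by (ii) unless $C \subseteq V_1$, which has negligible probability thanks to the distinct-tails assertion in (ii) -- and replace $f(v)$ by an alternative in-edge $(u',v)$ whose tail $u'$ lies outside the current weakly connected component of $v$ in $F$ and whose colour is either unused in $F$ or equal to the colour of the edge being removed. By (iii)--(iv), $v$ has enough in-edges from outside its component, and among these the random colouring together with the colour slack a.a.s.\ produces at least one with an acceptable colour. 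Each such swap strictly decreases the number of directed cycles in $F$, so the procedure terminates in $O(|V_1|)$ rounds and yields the desired rainbow arborescence. The principal technical difficulty throughout is to make the swap step robust given the very tight palette size $(1+o(1))n$ and the correspondingly thin slack of unused colours.
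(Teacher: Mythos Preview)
Your overall plan --- obtain a rainbow system of distinct in-edges via Hall's theorem and then swap edges to destroy cycles --- is natural, but the swap phase has two genuine gaps.

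First, the bound on the number of swaps. Each swap kills exactly one cycle, and the number of directed cycles in $F$ equals the number of weak components of the functional digraph $F$ not containing $r$. Nothing in your construction controls this quantity: Hall's theorem only certifies the existence of \emph{some} rainbow system of representatives, and among the many valid matchings there is no reason the one produced has few components. The set $V_1$ plays no role here --- a cycle of $F$ can consist entirely of high-in-degree vertices --- so the claimed $O(|V_1|)$ bound is unsupported. (Your side remark that $C\subseteq V_1$ is ruled out by the distinct-tails property in (ii) is also incorrect: a directed cycle on $V_1$-vertices automatically has pairwise distinct tails.)

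Second, and more fundamentally, the dependency. You reveal the entire colouring in order to build $F$ via Hall, and then in the swap step you appeal to ``the random colouring \ldots\ a.a.s.\ produces at least one with an acceptable colour''. But once you condition on the matching $F$ and on the set of unused colours, the colours on $v$'s remaining in-edges are no longer fresh uniform samples; $F$ is a complicated function of the whole colouring. With only $\Theta(\eps n)=\Theta(n\log\log n/\log n)$ slack colours against $O(\log n)$ in-edges per vertex, the margin is exactly of the order where such conditioning bites, and there is no clean way to recover independence after the fact --- especially if the number of swaps is not polylogarithmic.

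This is precisely why the paper does not try to repair a rainbow assignment. It reserves $5\eps n$ \emph{special} colours up front and uses multi-round exposure: the Hall-type matching (Lemma~\ref{lem:assignment}) is proved while exposing only heads and regular colours; a rainbow sub-arborescence of size $n^{2/3}$ is then grown at the root using leftover regular colours; and only afterwards are the \emph{tails} of the matched (``mapping'') edges revealed. Because those tails were never conditioned on, they form a genuinely uniform loop-less random mapping --- which has $O(\log n)$ cycles a.a.s.\ and to which Burtin's epidemic bound applies, absorbing all but $O(n^{5/6})$ vertices --- and the still-unrevealed special edges then connect the $O(\log n)$ remaining components. The staged exposure is not cosmetic; it is the device that dissolves the three-matroid obstruction you correctly identify.
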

The following corollary follows immediately from~(\ref{eq:thr_in-degree}).
\begin{corollary}
Let $m = n (\log n + c)$ for some $c \in \R$. Then,
$$
\Prob (\Rr_m) = \of{1+e^{-c}+o(1)} e^{-e^{-c}}.
$$
\end{corollary}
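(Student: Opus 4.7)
The plan is to deduce the corollary by combining Theorem~\ref{thm:main} with the Poisson asymptotics~(\ref{eq:thr_in-degree}) already recorded in the introduction. The key structural observation is that both events $\Rr_m$ and $\Zz_m$ are monotone increasing in $m$ along the random digraph process: adding a new edge cannot destroy a pre-existing rainbow arborescence, and since an additional edge can only raise in-degrees, once at most one vertex has in-degree zero the same remains true forever. Hence $\Rr_m=\{m_{\Rr}\le m\}$ and $\Zz_m=\{m_{\Zz}\le m\}$. Moreover, the containment $\Rr_m\subseteq\Zz_m$ is automatic: any rainbow arborescence is in particular a spanning arborescence rooted at some vertex $r$, so every vertex other than $r$ has in-degree at least one, i.e., at most one vertex has in-degree zero.

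Combining this containment with Theorem~\ref{thm:main}, which asserts $m_{\Rr}=m_{\Zz}$ a.a.s., I obtain
$$
0\le \Prob(\Zz_m)-\Prob(\Rr_m) = \Prob(\Zz_m\sm\Rr_m) \le \Prob(m_{\Rr}\neq m_{\Zz})=o(1),
$$
where the middle inequality uses that $\Zz_m\sm\Rr_m\subseteq\{m_{\Zz}\le m<m_{\Rr}\}\subseteq\{m_{\Zz}\neq m_{\Rr}\}$. Substituting $m=n(\log n+c)$ into~(\ref{eq:thr_in-degree}) gives $\Prob(\Zz_m)=(1+e^{-c}+o(1))e^{-e^{-c}}$, and consequently the same asymptotic holds for $\Prob(\Rr_m)$, which is exactly the claimed formula. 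There is no substantive obstacle here; the corollary is a purely formal consequence of the hitting-time identity supplied by Theorem~\ref{thm:main} together with the already tabulated Poisson limit for the number of vertices of in-degree zero.
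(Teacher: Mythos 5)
Your argument is correct and is exactly the deduction the paper intends: the paper states the corollary "follows immediately from~(\ref{eq:thr_in-degree})" via Theorem~\ref{thm:main}, and your write-up simply spells out the standard monotonicity/hitting-time bookkeeping ($\Rr_m\subseteq\Zz_m$, $\Prob(\Zz_m\sm\Rr_m)\le\Prob(m_{\Rr}\neq m_{\Zz})=o(1)$) that justifies that one-line claim. No gaps; this matches the paper's approach.
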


\bigskip

Our results refer to the random graph process. However, it will be sometimes easier to work with the $D(n,p)$ model instead of $\D(n,m)$. The \textbf{random digraph} $D(n,p)$ consists of the probability space $(\Omega, \mathcal{F}, \Prob)$, where $\Omega$ is the set of all digraphs with vertex set $\{1,2,\dots,n\}$, $\mathcal{F}$ is the family of all subsets of $\Omega$, and for every $G \in \Omega$,
$$
\Prob(G) = p^{|E(G)|} (1-p)^{n(n-1) - |E(G)|} \,.
$$
This space may be viewed as the set of outcomes of $n(n-1)$ independent coin flips, one for each ordered pair $(u,v)$ of vertices, where the probability of success (that is, adding directed edge $(u,v)$) is $p.$ Note that $p=p(n)$ may (and usually does) tend to zero as $n$ tends to infinity.  We often write $D(n,p)$ when we mean a graph drawn from the distribution $D(n,p)$.  

\bigskip

Lemma~\ref{lem:gnp_to_gnm} below provides us with a tool to translate results from $D(n,p)$ to $\D(n,m)$---see the first Proposition in \cite{pit}. 

\begin{lemma}\label{lem:gnp_to_gnm}
Let $P$ be an arbitrary property and set $p =p(n) = m/{n(n-1)}$. If $m=m(n)\to \infty$ is any function such that $m(1-p) \to\infty$, then,
$$
\Prob(\D(n,m) \in P) \le 5 \sqrt{m} \cdot \Prob(D(n,p) \in P).
$$
\end{lemma}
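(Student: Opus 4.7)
The plan is based on the classical identity that $D(n,p)$, conditioned on having exactly $m$ edges, is distributed identically to $\D(n,m)$: both are uniform on the $\binom{N}{m}$ digraphs with $m$ edges, where $N := n(n-1)$. Using this, I would write, for any property $P$,
\begin{align*}
\Prob(D(n,p) \in P) &\ge \Prob\of{D(n,p) \in P \mid |E(D(n,p))| = m} \cdot \Prob(|E(D(n,p))| = m) \\
&= \Prob(\D(n,m) \in P) \cdot \Prob(X = m),
\end{align*}
where $X := |E(D(n,p))| \sim \Bin(N,p)$. Rearranging, it is enough to prove the lower bound $\Prob(X = m) \ge 1/(5\sqrt{m})$ for all $n$ large.

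To do this, note that since $p = m/N$, the integer $m$ is simultaneously the mean of $X$ and (essentially) its mode. Writing $\Prob(X=m) = \binom{N}{m} p^m(1-p)^{N-m}$ and applying Stirling's formula to each of $N!$, $m!$, $(N-m)!$, the identities $Np = m$ and $N(1-p) = N-m$ force the ratio $(Np/m)^m \cdot (N(1-p)/(N-m))^{N-m}$ to equal $1$, so the exponential factors cancel exactly, leaving
$$\Prob(X=m) = \of{1+o(1)} \cdot \frac{1}{\sqrt{2\pi m(1-p)}}.$$
The hypotheses $m \to \infty$ and $m(1-p) \to \infty$ (the latter forcing $N-m = m(1-p)/p \ge m(1-p) \to \infty$ as well, since $p \le 1$) are precisely what one needs to ensure that Stirling's relative error is $o(1)$ in each of the three factorials. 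Since $p \le 1$,
$$\sqrt{2\pi m(1-p)} \ \le \ \sqrt{2\pi m} \ < \ 2.51 \sqrt{m},$$
so the constant $5$ comfortably absorbs the $(1+o(1))$ factor once $n$ is large, giving $1/\Prob(X=m) \le 5\sqrt{m}$ and hence the desired bound.

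There is no substantive obstacle here; this is a standard Pittel-type transference inequality. The only delicate point is confirming that the Stirling errors are genuinely $o(1)$, for which the hypothesis $m(1-p) \to \infty$ (rather than just $m \to \infty$) is essential---without it, the $(N-m)!$ term would not be controlled. The factor of $5$ is chosen large enough that the bound holds for all sufficiently large $n$ uniformly in the property $P$.
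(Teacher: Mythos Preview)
Your argument is correct and is the standard proof of this transference inequality. The paper itself does not supply a proof of this lemma; it merely cites the first Proposition of Pittel~\cite{pit}, of which your write-up is essentially a self-contained reproduction.
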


We will also use the following version of Chernoff bound:
\begin{lemma}[\textbf{Chernoff Bound}] 
If $X$ is a binomial random variable with expectation $\mu$, and $0<\delta<1$, then 
$$
\Pr[X < (1-\delta)\mu] \le \exp \left( -\frac{\delta^2 \mu}{2} \right)
$$ 
and if $\delta > 0$,
\[\Pr\sqbs{X > (1+\delta)\mu} \le \exp\of{-\frac{\delta^2 \mu}{2+\delta}}.\]
\end{lemma}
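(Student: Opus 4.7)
The plan is to use the standard Chernoff--Bernstein exponential-moment method. Write $X = \sum_{i=1}^n X_i$ where the $X_i$ are independent indicators with $\Pr(X_i = 1) = p_i$ and $\mu = \sum_i p_i$. For any $t > 0$, Markov's inequality applied to $e^{tX}$ gives
$$\Pr[X \geq (1+\delta)\mu] = \Pr[e^{tX} \geq e^{t(1+\delta)\mu}] \leq e^{-t(1+\delta)\mu}\, \E[e^{tX}].$$
By independence and the elementary inequality $1+x \leq e^x$,
$$\E[e^{tX}] = \prod_{i=1}^n \bigl(1 + p_i(e^t - 1)\bigr) \leq \prod_{i=1}^n \exp\bigl(p_i(e^t-1)\bigr) = \exp\bigl(\mu(e^t - 1)\bigr).$$
Choosing the optimal $t = \log(1+\delta)$ yields the usual closed form
$$\Pr[X \geq (1+\delta)\mu] \leq \left(\frac{e^{\delta}}{(1+\delta)^{1+\delta}}\right)^{\mu}.$$

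For the lower tail, I would run a symmetric argument using $e^{-tX}$ for $t>0$. Markov's inequality plus the same product bound give
$$\Pr[X \leq (1-\delta)\mu] \leq e^{t(1-\delta)\mu}\, \E[e^{-tX}] \leq \exp\bigl(\mu(e^{-t} - 1 + t(1-\delta))\bigr),$$
and optimizing at $t = -\log(1-\delta)$ produces
$$\Pr[X \leq (1-\delta)\mu] \leq \left(\frac{e^{-\delta}}{(1-\delta)^{1-\delta}}\right)^{\mu}.$$

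It remains to clean up the exponents into the desired forms. For the upper tail I need $(1+\delta)\log(1+\delta) - \delta \geq \delta^2/(2+\delta)$ for all $\delta > 0$; for the lower tail I need $(1-\delta)\log(1-\delta) + \delta \geq \delta^2/2$ for $0 < \delta < 1$. Both are standard calculus facts: in each case the two sides agree at $\delta=0$ together with their first derivatives, so it suffices to compare second derivatives (or equivalently expand in Taylor series and observe that all remaining coefficients have the correct sign). Substituting these into the exponents gives exactly the stated bounds
$\exp(-\delta^2\mu/(2+\delta))$ and $\exp(-\delta^2\mu/2)$.

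The genuinely interesting step is the exponential-moment bound and the product expansion using independence; everything else is optimization in $t$ and elementary real analysis. The only mild obstacle is the last step, matching the optimized Chernoff bound to the cleaner form in the statement, which is purely a one-variable calculus exercise and presents no conceptual difficulty.
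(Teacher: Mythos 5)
Your argument is correct and complete: the exponential-moment (Markov on $e^{\pm tX}$) derivation of the two optimized bounds is the standard one, and the two calculus inequalities you invoke, $(1+\delta)\log(1+\delta)-\delta \ge \delta^2/(2+\delta)$ and $(1-\delta)\log(1-\delta)+\delta \ge \delta^2/2$, do follow by the second-derivative comparison you describe (e.g.\ $(2+\delta)^3 \ge 8(1+\delta)$ for the first, $1/(1-\delta)\ge 1$ for the second). The paper itself gives no proof of this lemma---it is quoted as a standard Chernoff bound---so there is nothing to compare against beyond noting that your proof is the textbook argument such a citation points to.
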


\section{Proof of Theorem~\ref{thm:main}}\label{sec:proof}

\subsection{Overview}

The proof is technical and involves revealing some aspects of the random object one by one. We give this overview to summarize the order in which randomness is revealed in the proof. 

Before the process begins we arbitrarily select $5\eps n$ colours to be ``special''. The remaining colours are ``regular''. Now, we consider the random graph process, and for each edge we reveal whether its colour is special or regular. If the edge is special, then we do not reveal anything else at this time; on the other hand, if the edge is regular, then we reveal its head vertex (but not its tail and not its colour).

After $m_-$ many steps of the random graph process, we identify some ``dangerous" vertices that have in-degree at most 2 (in regular colours).  Now, for these dangerous vertices, we expose their in-degree in special edges. We then continue the process until time $m_\Zz$ revealing the heads of regular coloured edges and revealing the heads of special edges directed to dangerous vertices. By doing this, we are able to determine exactly when the event $\Zz_m$ holds for the first time (\emph{i.e.} $m_\Zz$) since the unique vertex of in-degree zero at time $m_\Zz$ must be among the dangerous vertices.  Actually, this vertex, which we call $u$, obviously must be among the vertices of in-degree zero, but we define dangerous in this way for another reason.  Also note that we do not need to know the special heads to non-dangerous vertices to determine $m_\Zz$. Our goal here is to show that at time $m_{\Zz}$, each vertex (except for $u$, which will become the root of the final arborescence) can select a unique colour from its in-edges. The $n-1$ colours used for this will now be called ``mapping colours''.  Note that the mapping colours are either regular or they are special and belong to an edge directed to a dangerous vertex. Thus we will be left with at least $(1+45\eps )n - (n-1) = 45\eps n +1$ many regular colours which are not mapping colours.

We will then show that a.a.s.\ there is an arborescence of size at least $n^{2/3}$, using only regular non-mapping colours, and rooted at the vertex $u$. At that point we reveal the tails of all the edges of mapping colours. These edges, together with the size $n^{2/3}$ arborescence, will a.a.s.\ give us one large arborescence with almost all of the vertices in the graph, together with a few small arborescences. To show that we can connect them all together, we finally reveal the heads and tails of the remaining unrevealed special edges. 

\subsection{Proof}

Let $\omega = \omega(n)$ be any function tending to infinity (but sufficiently slowly) together with $n$. For definiteness, let $\omega = \omega(n) = \log \log n$. Let
$$
m_- := \left\lfloor n ( \log n - \omega ) \right \rfloor \quad \text{ and } m_+ := \left\lceil n ( \log n + \omega ) \right \rceil.
$$
(Here we make sure $m_-$ and $m_+$ are both integers but later on expressions such as $n ( \log n + \omega )$ that clearly have to be an integer, we round up or down but do not specify which: the choice of which does not affect the argument.) As we already mentioned, $m_{\Rr} \ge m_{\Aa} \ge m_{\Zz}$ and it is known that a.a.s.
\begin{equation}\label{eq:range_for_m}
m_- \le m_{\Zz} \le m_+.
\end{equation}
Our goal is to show that at time $m_{\Zz}$, a.a.s.\ there is a rainbow arborescence in $\D(n,m_{\Zz})$; that is, that a.a.s.\ $m_{\Rr} = m_{\Zz}$.

\bigskip

Let us start with showing that at time $m_{\Zz}$, a.a.s.\ all but the only vertex of in-degree zero can select a unique colour assigned to an edge from some of their in-neighbours. However, as we mentioned earlier, it is important that we apply the so-called \textbf{multi-round exposure} technique. We arbitrarily choose $5\eps n$ colours to be \textbf{special} and let the other $(1+45\eps)n$ colours be \textbf{regular}.  At this point of the process, we expose only the number of in-neighbours and colours assigned to the regular-coloured associated edges but no in-neighbour is exposed yet. We also reveal the in-degrees in special colours of the dangerous vertices (formally defined in the proof of Lemma~\ref{lem:assignment} below), but nothing else. As a consequence of this, if we condition on the event that vertex $v$ has precisely $k$ in-neighbours (and perhaps also on the event that some of the edges from them to $v$ are coloured in the desired way), in-neighbours can be selected uniformly at random from all ${n-1 \choose k}$ possible $k$-tuples. 

\begin{lemma}\label{lem:assignment}
A.a.s.\ the following property holds in $\D(n,m_{\Zz})$.  There exists a set $T \subset W$ of size $n-1$, a vertex $u \in V$, and a perfect matching $f : V \setminus \{u\} \to T$ so that every vertex $v \in V  \setminus \set{u}$ has an edge directed to $v$ in colour $f(v)$.
\end{lemma}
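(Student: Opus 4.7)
We reformulate the lemma as Hall's theorem. Let $u$ be the vertex of in-degree zero in $\D(n, m_{\Zz})$ (guaranteed and a.a.s.\ unique by $\Zz_{m_{\Zz}}$), and for each $v \in V \setminus \set{u}$ let $L(v) \subseteq W$ be the set of colours appearing on in-edges of $v$. The required $f$ is exactly a system of distinct representatives for $(L(v))_{v \neq u}$, so by Hall's theorem it suffices to show a.a.s.\ that
\begin{equation*}
\abs{\bigcup_{v \in S} L(v)} \ge \abs{S} \qquad \text{for every } S \subseteq V \setminus \set{u}.
\end{equation*}
Conditional on $\D(n, m_{\Zz})$ the edge colours are independent and uniform on $W$; in particular, for any $S$ with $|S| = s$ and any $C \subseteq W$ with $|C| = s - 1$, the probability that every in-edge to $S$ carries a colour in $C$ is $\of{(s-1)/\abs{W}}^{e(S)}$, where $e(S)$ is the number of in-edges to $S$ in $\D(n,m_{\Zz})$.

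We verify the condition by a union bound in three regimes of $s$. For \emph{small} $s$ (say $s \le \log^3 n$), the dominant obstructions involve subsets of low-in-degree vertices. A Poisson-approximation estimate on in-degrees (applied either directly or via Brun's sieve as in the introduction) shows that a.a.s.\ $|V^{(k)}| \le 2(\log n)^k / k!$, where $V^{(k)}$ denotes vertices of in-degree exactly $k$; in particular $|V^{(\le K)}| = n^{o(1)}$ with $K = \log n / (\log \log n)^2$. Hall-violations by subsets of $V^{(\le K)}$ reduce to colour collisions among independent uniform colours on rare low-degree vertices, and the expected number of minimal such collisions is $\binom{|V^{(1)}|}{2}/\abs{W} = O((\log n)^2 / n) = o(1)$, with the higher-in-degree profiles contributing strictly less; Hall-violations by subsets containing at least one $v$ with $d^{-}(v) > K$ are immediately ruled out since $e(S) \ge K = \omega(1)$ makes the factor $\of{(s-1)/\abs{W}}^{e(S)}$ super-polynomially small and the sum over $(S,C)$ tends to zero. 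For \emph{medium} $s$ ($\log^3 n < s < n - \log^3 n$), the refined Chernoff bound $\Prob(X \le k) \le e^{-\mu}(e\mu/k)^k$ applied to $e(S)$ (a sum of $s(n-1)$ independent indicators with mean $(1+o(1))s \log n$), together with a union bound over $S$, yields $e(S) \ge s\log n / (\log \log n)^2$ uniformly, which dwarfs $\binom{n}{s}\binom{\abs{W}}{s-1}$ in the exponent. For \emph{large} $s \ge n - \log^3 n$, let $T := V \setminus (\set{u} \cup S)$, so $|T| \le \log^3 n$; a colour absent from $N(S)$ is either missing from $\D(n,m_{\Zz})$ entirely (at most $n^{o(1)}$ such, by the coupon-collector estimate in the introduction) or appears only on in-edges to $T \cup \set{u}$ (at most $O(|T| \log n) = n^{o(1)}$ such), while $\abs{W} - s \ge 50\eps n \gg n^{o(1)}$, confirming Hall's condition.

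The multi-round exposure described in the overview -- partitioning colours into regular/special, revealing only regular-edge heads plus special in-degrees of dangerous vertices -- is not essentially invoked for Hall itself; its purpose is rather to preserve the tails of regular edges and the entirety of special edges directed to non-dangerous vertices as fresh randomness for the subsequent arborescence construction. \textbf{The main obstacle} of the present lemma is the small-$s$ regime, where potential colour coincidences among the few vertices of very low in-degree must be ruled out; this is handled via the Poisson-approximation bounds on $|V^{(k)}|$ and the observation that any Hall-violating $S$ contains a minimal colour-collision whose expected count is already $o(1)$.
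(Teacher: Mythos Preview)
Your approach via Hall's theorem matches the paper's, but your execution in the small-$s$ regime has a genuine gap. In case (b) (sets $S$ with $|S|=s\le\log^3 n$ containing some $v$ with $d^-(v)>K=\log n/(\log\log n)^2$), you assert that $e(S)\ge K$ makes the sum over $(S,C)$ tend to zero. It does not: for $s$ near $\log^3 n$ the number of pairs $(S,C)$ is of order $\binom{n}{s}\binom{|W|}{s-1}=\exp\bigl(\Theta(\log^4 n)\bigr)$, while $\bigl((s-1)/|W|\bigr)^{K}\le(\log^3 n/n)^{K}=\exp\bigl(-\Theta(\log^2 n/(\log\log n)^2)\bigr)$, and $\log^4 n\gg\log^2 n/(\log\log n)^2$. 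The bound $e(S)\ge K$ does not grow with $s$, so it cannot beat $\binom{n}{s}$ once $s$ is polylogarithmic. Your case (a) is also only a sketch: you compute the $s=2$ contribution from pairs in $V^{(1)}$ and assert that ``higher-in-degree profiles contribute strictly less'' without addressing minimal witnesses of size $\ge 3$ at all.

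The paper's cure is to work with \emph{minimal} $k$-witnesses throughout: if $(S,T)$ is minimal then every colour in $T$ is adjacent to at least two vertices of $S$, which forces at least $2(k-1)$ in-edges into $S$. This lower bound scales with $k$, and that is exactly what is needed to defeat the $\binom{n}{k}\binom{|W|}{k-1}$ count uniformly in $k\ge 3$; the paper carries out the estimate in $D(n,p)$ for each $p\in[p_-,p_+]$ with error $o((n\log n)^{-3/2})$ and then union-bounds over the $O(n\log n)$ possible values of $m_{\Zz}$. The case $k=2$ is handled separately (and this is where the dangerous/non-dangerous split is actually used in the lemma): a.a.s.\ the $O(\log^9 n)$ dangerous vertices receive pairwise distinct colours on their first in-edges, and every non-dangerous vertex sees at least two colours among its first three regular in-edges. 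Your three-regime split could be made to work, but only after importing the minimal-witness inequality $e(S)\ge 2(|S|-1)$ into the small and lower-medium ranges.
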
 

Note that $u$ must be the only vertex of in-degree zero in $\D(n,m_{\Zz})$. This vertex will be used as a root of the arborescence.  The $n-1$ colours from the set $T$ will now be called \textbf{mapping colours}.

\begin{proof}
We think of the assignment problem we deal with as a bipartite graph: on one side we have the set of $n$ vertices $V$, on the other side we have the set of $(1+50\eps)n$ colours $W$. A vertex $v \in V$ is adjacent to colour $c \in W$ if at least one edge to $v$ has colour $c$. We use Hall's necessary and sufficient condition to show that the desired matching exists. 

There is no matching saturating $V$ if and only if for some $k \ge 1$ there exists a \textbf{$k$-witness}, that is, a pair $(S,T)$ of sets $S \subseteq V$, $T \subseteq W$ such that $|S|=k$, $|T|=k-1$, and $N(S) \subseteq T$. 
We say that vertex $v$ is \textbf{dangerous} if its in-degree (in regular colours) is at most $2$ at time $m_-$. The probability that a given vertex has $\ell \le 2$ regular-coloured edges incident to it is equal to
\begin{align*}
{m_- \choose \ell} & \of{ \frac {n-O(\ell)}{n(n-1)-O(m_-)} \cdot \frac{1+45 \eps}{1+50\eps}}^{\ell} \of{ 1 - \frac{n-O(\ell)}{n(n-1) - O(m_-)}\cdot \frac{1+45 \eps}{1+50\eps} }^{m_- - \ell} \\
& = (1+o(1)) \frac {(n \log n)^{\ell}}{\ell!} \cdot \frac {1}{n^{\ell}} \cdot \exp \of{ - \frac {m_-}{n} \of{ 1-5\eps+O(\eps^2) }} \\
& = (1+o(1)) \frac {(\log n)^{\ell}}{\ell!} \cdot \exp \of{ - \log n + \omega + 5 \eps \log n } \\
& = (1+o(1)) \frac {e^{\omega + 5\eps \log n} (\log n)^{\ell}}{\ell! n}. 
\end{align*}
Hence, the expected number of dangerous vertices is $O(e^{\omega + 5 \eps \log n} \log^{2}  n) = O(\log^{8} n)$ and so a.a.s.\ there are at most, say, $O(\log^{9} n)$ dangerous vertices. We reveal the in-degrees in special colours for these vertices and continue to do so for the rest of the process until time $m_{\Zz}$. Now, it follows immediately from the definition of $m_{\Zz}$ that there is precisely one 1-witness at that point of the process: $S=\{u\}$, where $u$ is the only vertex of in-degree zero. 

Now, our goal is to show that a.a.s.\ there is a matching saturating $V \setminus \{u\}$; that is, a.a.s.\ there is no $k$-witness $(S,T)$ with $S \subseteq V \setminus \{u\}$ and $2 \le k \le n-1$.  First, let us concentrate on the  case $k=2$.  Note that at time $m > m_-$, a dangerous vertex may or may not have at least $3 $ in-neighbours but a non-dangerous vertex will definitely have at least $3 $. Moreover, note that vertex $u$ is dangerous but every other dangerous vertex has in-degree at least 1 at time $m_{\Zz}$. Let us focus on the first incoming edges assigned to these dangerous vertices (regardless of whether they are regular-coloured or special), conditioning on the event that there are $O(\log^{9} n)$ such vertices.  The probability that two of these dangerous vertices receive the same regular colour on their first incoming edge is at most
\[\binom{O(\log^{9}n)}{2}\cdot\of{1+45\eps}n\cdot\of{\frac{1}{(1+45\eps)n}}^2 = O\of{\frac{\log^{18}n}{n}} = o(1).\]
Similarly, the probability that two of these dangerous vertices receive the same special colour on their first incoming edge is at most
\[\binom{O(\log^{9}n)}{2}\cdot 5\eps n\cdot\of{\frac{1}{5 \eps n}}^2 = O\of{\frac{\log^{18}n}{\eps n}} = o(1).\]

So we may assume that at time $m_{\Zz}$, every pair of dangerous vertices (excluding $u$) sees at least 2 different colours. On the other hand, for each non-dangerous vertex we focus on the first $3$ regular-coloured incoming edges assigned to them. The probability that all 3 of these edges receive the same colour is equal to
$$
{(1+45\eps) n } \of{ \frac {1}{(1+45\eps)n} }^3  = O \of{ \frac {1}{n^2} }.
$$
Hence, a.a.s.\ all non-dangerous vertices have at least least $2$ distinct colours assigned at time $m_{\Zz}$. Conditioning on these two events, we get that there is no $2$-witness at time $m_{\Zz}$. Indeed, every vertex in a set $S \subseteq V$ consisting of two dangerous vertices yields a unique colour; on the other hand, $S$ containing at least one non-dangerous vertex yields at least $2$ colours.

Now, we move to the case $k \ge 3$. This time, it will be convenient to focus on minimal configurations. A $k$-witness is \textbf{minimal} if there does not exists $S' \subset S$ and $T' \subset T$ such that $(S',T')$ is a $k'$-witness, where $k' < k$. It is straightforward to see that if $(S,T)$ is a minimal $k$-witness, then every $c$ in $T$ has degree at least 2 in the graph induced by $S \cup T$. Hence there are at least $2(k-1)$ edges between $S$ and $T$. The goal is to show that there is no minimal $k$-witness in $\D(n,m_{\Zz})$, even when only regular colours are taken into consideration. However, having no minimal $k$-witness at time $m_1$ does not imply that there is no $k$-witness at time $m_2  > m_1$. Hence, we need to estimate the probability that the desired property holds for a given $m$ such that $m_- \le m \le m_+$ and then take a union bound over all $O(n \omega) = O(n \log n)$ possible values of $m$. In fact, we will show that with probability $1-o( (n \log n)^{-1} (n \log n)^{-1/2})$, $D(n,p)$ has no $k$-witness for  $k \ge 3$, for 
$$
p_- \le p \le p_+
$$
where
\[p_\pm = \frac {m_\pm}{n(n-1)} = \frac {\log n \pm \omega + o(1)}{n}.\]
As already mentioned, the claim will hold by Lemma~\ref{lem:gnp_to_gnm} and the union bound.

Let 
$$
p':= p \cdot \frac{1+45 \eps}{1+50\eps} = \frac{\log n  -O(\log \log n)}{n}
$$ 
be the probability that an edge is both present and of a regular colour. Fix $S \subseteq V,T \subseteq W$, $|S|=k$, $|T|=k-1$. The probability that the number of in-neighbours (to $S$, in regular colours) is equal to $t$ is
$$
{k(n-1) \choose t} p'^t (1-p')^{k(n-1)-t}.
$$
So the probability that $(S,T)$ is a $k$-witness is at most
\begin{align*}
p_{S,T} &= \sum_{t \ge 2 (k-1)} {k(n-1) \choose t} p'^t (1-p')^{k(n-1)-t} \of{ \frac {k-1}{(1+ 45\eps)n}}^t.
\end{align*}
Thus, the expected number of minimal witnesses for any $k \ge 3$ is at most
\begin{align}
&\sum_{k \ge3} \binom{n}{k}\binom{(1+45\eps)n}{k-1}\sum_{t \ge 2 (k-1)}   {k(n-1) \choose t} p'^t (1-p')^{k(n-1)-t} \of{ \frac {k-1}{(1+45\eps)n}}^t. \label{witnesses}
\end{align}
To estimate the inner sum, first note that the ratio of consecutive terms is equal to
\begin{align*}
\frac{{k(n-1) \choose t+1} p'^{t+1} (1-p')^{k(n-1)-t-1} \of{ \frac {k-1}{(1+45\eps)n}}^{t+1}}{{k(n-1) \choose t} p'^t (1-p')^{k(n-1)-t} \of{ \frac {k-1}{(1+45\eps)n}}^t} &= \frac{k(n-1) - t }{t+1} \cdot \frac{p'}{1-p'} \cdot \frac{k-1}{(1+45\eps)n}\\
& \le \frac{k(n-1) - 2 (k-1) }{2 (k-1)+1} \cdot \frac{p'}{1-p'} \cdot \frac{k-1}{(1+45\eps)n}\\
& \le \frac{k \log n}{1.5n}
\end{align*}
for $n$ sufficiently large. So, when $k < \frac{n}{\log n}$, the inner sum is of the order of its first term. Now, for $k \ge \frac{n}{\log n}$, note that each term is small: 
\begin{align*}
\binom{n}{k} & \binom{(1+45\eps)n}{k-1}{k(n-1) \choose t} p'^t (1-p')^{k(n-1)-t} \of{ \frac {k-1}{(1+45\eps)n}}^t  \\
& \le \bfrac{ne}{k}^k \bfrac{(1+45\eps)ne}{k-1}^{k-1} \bfrac{k(n-1)e}{t}^t \bfrac{p'}{1-p'}^t \exp\{-k(n-1)p'\} \bfrac {k-1}{(1+45\eps)n}^t\\
& \le \exp\left\{-k(n-1)p' + 2 k \log\bfrac{2ne}{k} \right\}  \bfrac{k^2p'e}{t(1-p')(1+45\eps)}^t \\
& \le \exp\left\{-k(n-1)p' + \frac{k^2p'}{(1-p')(1+45\eps)} + 3 k \log \log n \right\},
\end{align*}
for $n$ large enough, since $\bfrac{c}{t}^t \le \exp \left\{ \frac{c}{e} \right\}$ which holds for all positive numbers $c, t$. Indeed, this inequality follows by noting that $\log(x) \le x/e$ for all $x>0$ and letting $x =c/t$. Hence, each term is at most
\begin{align*}
 \exp&\left\{-k(n-1)p' + k n p' (1+O(p'))(1- 45 \eps + O\of{\eps^2} ) + 3 k \log \log n \right\}   \\
&  = \exp\left\{- (45+o(1)) \of{\eps knp'} + 3 k \log \log n \right\} \\
& \le \exp\left\{-41 k \log \log n \right\} = \exp\left\{-\Omega\of{ \frac {n\log \log n}{\log n} } \right\}.
\end{align*}
Therefore, the contribution from these terms is $O(n^2 \exp(-\Omega(n \log \log n / \log n))) = o(n^{-2})$ and \eqref{witnesses} can be upper bounded by
\begin{align*}
&o(n^{-2}) + \sum_{k = 3}^{\frac{n}{\log n}} \sum_{t \ge 2 (k-1)} \binom{n}{k}\binom{(1+45\eps)n}{k-1} {k(n-1) \choose t} p'^t (1-p')^{k(n-1)-t} \of{ \frac {k-1}{(1+45\eps)n}}^t  \\
& = o(n^{-2}) + O\of{ \sum_{k =  3}^{\frac{n}{\log n}}  \binom{n}{k}\binom{(1+45\eps)n}{k-1} {k(n-1) \choose 2(k-1)} p'^{2(k-1)} (1-p')^{k(n-1)-2(k-1)} \of{ \frac {k}{(1+45\eps)n}}^{2(k-1)}}\\
&= o(n^{-2}) +  O\of{ \sum_{k = 3}^{\frac{n}{\log n}} \bfrac{ne}{k}^k\bfrac{(1+45\eps)ne}{k-1}^{k-1} \of{\frac{k(n-1)e}{2(k-1)}\cdot\frac{p'}{1-p'}\cdot\frac{k-1}{(1+45\eps)n}}^{2(k-1)}(1-p')^{k(n-1)} } \\
&= o(n^{-2}) +  O\of{ \sum_{k = 3}^{\frac{n}{\log n}} \exp\of{-(k-1)\log n + O(k\omega + k\log\log n)}}\\
&= o(n^{-2}) +  O\of{ \sum_{k = 3}^{\frac{n}{\log n}} \exp\of{- \opoo(k-1)\log n  } } \\
&= o\of{n^{-2}} + n^{-  2+ o(1)} =o( (n \log n)^{-1} (n \log n)^{-1/2})
\end{align*}
as desired, since the sum on the second to last line is dominated by its first term. The proof of the lemma is finished, as explained earlier.
\end{proof}

\bigskip

Now we will show that some of the regular colours not in $T$ can be used to start building a rainbow arborescence of size at least $n^{2/3}$ rooted at the special vertex $u$. After that, we will show how to convert this arborescence into a spanning one. Let us note that Lemma~\ref{lem:assignment} proves only the existence of a set of $n-1$ colours and a corresponding matching. Unfortunately, we do not control which colours are used and which are still available for us to construct the arborescence around $u$. Therefore, we can only use properties that hold for \emph{all} sets of colours of the desired size.

Let us start with the following useful observation that holds a.a.s.\ in $\D(n,m_-)$, and is deterministically preserved in $\D(n,m)$, provided that $m > m_-$.

\begin{lemma}\label{lem:no_bad_epsn}
A.a.s.\ the following property holds in $\D(n,m_-)$.  Every set of colours $S \subseteq W$ with $|S| = 45 \eps n$ satisfies 
\[\abs{\set{v\in V\,:\, \deg^-_{S}(v) \le  43 \eps\log n}} \le \frac{n}{\eps \log n} = \frac{n}{ \log \log n}\]
where $\deg_S^-(v)$ represents the in-degree of $v$ in the graph induced by edges whose colours are from $S$.
\end{lemma}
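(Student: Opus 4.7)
The plan is to transfer the problem to the product model $D(n,p_-)$ with $p_- = (\log n - \omega)/n$ via Lemma~\ref{lem:gnp_to_gnm}, and exploit the crucial observation that in $D(n,p)$ the in-degrees $\deg^-_S(v_1)$ and $\deg^-_S(v_2)$ of distinct vertices $v_1\neq v_2$ depend on disjoint sets of potential edges, and hence are mutually independent. This independence is the engine of the argument.

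Fix a set $S\subseteq W$ with $|S|=45\eps n$. For each vertex $v$, the random variable $\deg^-_S(v)$ is $\Bin\of{n-1,\, p_-\cdot 45\eps/(1+50\eps)}$ with mean $\mu = (45+o(1))\eps\log n = (45+o(1))\log\log n$. A routine lower-tail Chernoff bound with deviation $\delta = 2/45 + o(1)$ then gives
\[
q := \Prob\of{\deg^-_S(v)\le 43\eps\log n} \le \exp\of{-\tfrac{2}{45}(1-o(1))\log\log n} = (\log n)^{-2/45+o(1)}.
\]
By the independence noted above, the number $X_S$ of vertices $v$ with $\deg^-_S(v)\le 43\eps\log n$ is $\Bin(n,q)$, and its mean $nq \le n(\log n)^{-2/45+o(1)}$ is vastly smaller than the target $a := n/\log\log n$. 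The standard upper-tail estimate $\Prob(X \ge a) \le (e\mu/a)^a$ therefore yields
\[
\Prob\of{X_S \ge n/\log\log n} \le \exp\of{-\tfrac{2}{45}(1-o(1))\, n}.
\]

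To conclude, I would union bound over all admissible $S$. Using $\binom{N}{k}\le (eN/k)^k$ with $N=(1+50\eps)n$ and $k=45\eps n$, the number of such $S$ is $\exp\of{O\of{n(\log\log n)^2/\log n}} = \exp(o(n))$, which is comfortably swallowed by the exponential saving from the Chernoff step. Thus the property fails in $D(n,p_-)$ with probability $\exp(-\Omega(n))$, and the $5\sqrt{m_-} = n^{1/2+o(1)}$ loss from Lemma~\ref{lem:gnp_to_gnm} is negligible, transferring the result to $\D(n,m_-)$. The main obstacle is simply one of constants: the Chernoff exponent $2/45$ produced by the single-vertex step must dominate both the entropy cost $O((\log\log n)^2/\log n)$ of choosing $S$ and the overhead from the $p$-to-$m$ translation. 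This is exactly why the hypothesis allows a fixed multiplicative slack between $43\eps\log n$ and the expected value $(45+o(1))\eps\log n$, rather than a vanishing one.
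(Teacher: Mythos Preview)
Your argument is correct and follows essentially the same route as the paper: transfer to $D(n,p_-)$, apply a lower-tail Chernoff bound to a single vertex, use the independence of the in-degrees $\deg^-_S(v)$ across vertices to control the number of bad vertices, union bound over the $\exp(o(n))$ choices of $S$, and transfer back via Lemma~\ref{lem:gnp_to_gnm}. The only cosmetic differences are in the constants (the paper uses $\mu>44\eps\log n$ and the cruder bound $q\le\exp(-\eps\log n/5000)$, and handles the tail of $X_S$ via $\binom{n}{a}q^a$ rather than your equivalent $(e\mu/a)^a$ formulation), but the structure and all key ideas are identical.
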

\begin{proof}
For convenience, we will work with $D(n,p_-)$ with 
$$
p_- = \frac {m_-}{n(n-1)} = \frac {\log n - \omega}{n-1} = \frac {\log n - \omega - o(1)}{n}.
$$
Let $S$ be any set of colours of size $45\eps n$. Then 
\[
\E\sqbs{\deg^-_S(v)} = \frac{45\eps}{1+50\eps} \ \cdot p_- \cdot (n-1) > 44 \eps \log n.
\]
It follows from Chernoff bound that
\[ \Pr \sqbs{\deg^-_S(v) \le 43 \eps \cdot \log n} \le \exp \left( - \frac{ 1}{2} \cdot \bfrac{1}{44}^2 \cdot \eps \log n \right) \le \exp\of{-\frac{\eps \log n}{5000} }. \]
So the probability that there exist $\frac{n}{\eps \log n}$ many vertices of small in-degree is at most 
\[ 
\binom{n}{\frac{n}{\eps \log n}} \cdot \exp \left( -\frac{\eps \log n}{5000} \cdot \frac{n}{\eps \log n} \right)  \le \exp \left(\frac{n}{\eps \log n} \cdot \log(e \eps \log n) -\frac{n}{5000} \right) \le \exp \of{-\Omega\of{ n}}.
\] 
Since there are 
$$
\binom{(1+50\eps )n}{45\eps n} \le \exp \of{45\eps n \log \bfrac{e \cdot (1+50\eps)}{45\eps}} = \exp \of{ O\of{ \frac {\log^2 \log n}{\log n} n }}  = \exp\of{o(n)}
$$ 
sets of colours to consider, the desired property holds with probability $1-o(1/n)$ in $D(n,p_-)$ by the union bound, and so it holds a.a.s.\ in $\D(n,m_-)$ by Lemma~\ref{lem:gnp_to_gnm}. 
\end{proof}

\bigskip

We will need the following three claims. The first claim is an easy observation. We say a vertex and an edge are incident if the vertex is either the head or tail of the edge.  

\begin{claim}\label{claim:basic_facts}
In $\D(n,m)$ with $m\le m_+$, a.a.s., 
\begin{enumerate}
\item no colour appears more than $10\log n$ times,
\item no vertex is incident to more than $10\log n$ edges,
\item no vertex is incident to more than $10$ edges of the same colour.
\end{enumerate}
\end{claim}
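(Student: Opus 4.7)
The plan is to pass to the independent-edge model $D(n,p_+)$ with $p_+=m_+/(n(n-1))=(1+o(1))(\log n)/n$ via Lemma~\ref{lem:gnp_to_gnm}, and to exploit the fact that each of (i)--(iii) is monotone non-decreasing in $m$: adding an edge can only increase colour-multiplicities and vertex-incidences, so if the properties hold at the single time $m_+$ then they hold at every $m\le m_+$. Since Lemma~\ref{lem:gnp_to_gnm} costs a factor of $5\sqrt{m_+}=O((n\log n)^{1/2})$, each failure event in $D(n,p_+)$ must be shown to occur with probability $o((n\log n)^{-1/2})$; every bound produced below is polynomially small in $n$ and so has ample slack.

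For (i), fix a colour $c\in W$. The number of edges of colour $c$ in $D(n,p_+)$ is $\mathrm{Bin}\bigl(n(n-1),\,p_+/((1+50\eps)n)\bigr)$, a random variable of mean $\mu=(1+o(1))\log n$. The Chernoff bound with $1+\delta=10\log n/\mu=10+o(1)$, i.e.\ $\delta=9-o(1)$, gives failure probability $\exp(-\delta^2\mu/(2+\delta))=n^{-81/11+o(1)}$, which easily survives the union over $(1+o(1))n$ colours. For (ii), the total incidence $\deg^+(v)+\deg^-(v)$ is $\mathrm{Bin}(2(n-1),p_+)$, of mean $(2+o(1))\log n$; Chernoff with $\delta=4-o(1)$ yields a per-vertex failure probability of $n^{-16/3+o(1)}$, which survives the union over $n$ vertices. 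For (iii), for each pair $(v,c)\in V\times W$ the number of $c$-coloured edges incident to $v$ is $\mathrm{Bin}\bigl(2(n-1),\,p_+/((1+50\eps)n)\bigr)$, of mean $\Theta((\log n)/n)$; the elementary first-moment bound
$$\binom{2n}{10}\bigl(p_+/((1+50\eps)n)\bigr)^{10}=O\bigl((\log n/n)^{10}\bigr)$$
easily absorbs the union over the $O(n^2)$ pairs.

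Main obstacle: honestly, there is none. The claim is a collection of ``basic facts'' and each part reduces to a textbook Chernoff or first-moment calculation whose failure probability is polynomially small in $n$, comfortably beating both the $\sqrt{m_+}$ loss in Lemma~\ref{lem:gnp_to_gnm} and the $O(n)$ or $O(n^2)$ union bounds. The only small checks needed are the monotonicity of each property in $m$ (so that a single checkpoint at $m_+$ suffices), and confirming that the Chernoff exponents for (i) and (ii) do leave a safety factor strictly larger than one after the union---both of which hold with comfortable room.
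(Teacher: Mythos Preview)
Your proposal is correct and follows the paper's own route exactly: the paper's proof says only that ``it is enough to prove these properties for $D(n,p_+)$ and apply Lemma~\ref{lem:gnp_to_gnm}; each follows easily from Chernoff and union bounds,'' and you have simply supplied those routine computations (together with the monotonicity observation that lets a single checkpoint at $m_+$ cover all $m\le m_+$). All of your numerical exponents check out with room to spare.
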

\begin{proof}
It is enough to prove these properties for $D(n,p_+)$ and apply Lemma \ref{lem:gnp_to_gnm}. Each follows easily from Chernoff and union bounds.
\end{proof}
While the above lemma holds for all colours, we will only use it for regular colours and for special colours incident to dangerous vertices.

\bigskip

As we will not have failure probabilities small enough to union bound over all choices of $m$ with $m_-
\le m\le m_+$, we would like to prove the existence of the desired structure at time $m_-$. One issue is that at time $m_-$, we do not know which vertex is going to be the root. However, we do know (as it is shown in the proof of Lemma~\ref{lem:assignment}) that a.a.s.\ there are $O(\log^{9} n)$ many dangerous vertices; that is, vertices of in-degree at most 2 in regular colours.  One of these vertices must be the root $u$ at time $m_{\Zz}$. 

Let $C'$ be an arbitrarily chosen set of $45 \eps n$ regular non-mapping colors. (Recall that there are at least $45\eps n +1$ many of them to choose from.  By Lemma~\ref{lem:no_bad_epsn}, all but at most $n/  \log \log n$ many vertices $v$ have $\deg_{C'}^-(v) \ge 43 \eps\log n$. We will call such vertices \textbf{good}.  

\begin{claim}\label{claim:pot_roots}
In $\D(n,m_-)$, a.a.s.\ every dangerous vertex $w$ has $\deg^+_{C'}(w) \ge 10 \eps\log n = 10 \log \log n$.
\end{claim}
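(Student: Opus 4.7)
The plan is to work in the binomial model $D(n,p_-)$ and transfer the conclusion to $\D(n,m_-)$ via Lemma~\ref{lem:gnp_to_gnm}. The key structural observation is that, for any fixed vertex $w$, the in-edges and out-edges of $w$ occupy disjoint sets of ordered pairs, so the event ``$w$ is dangerous'' (a function of the colours on $w$'s in-edges) is independent of $\deg^+_{C'}(w)$ (a function of the colours on $w$'s out-edges).

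Fix any set $C' \subseteq W$ of $45\eps n$ regular colours. In $D(n,p_-)$, the random variable $\deg^+_{C'}(w)$ is $\Bin\bigl(n-1,\, p_- \cdot 45\eps/(1+50\eps)\bigr)$ with mean $\mu = (1+o(1)) \cdot 45 \eps \log n$. Applying the Chernoff bound with $\delta = 7/9$ yields
\[
\Pr\bigl[\deg^+_{C'}(w) < 10 \eps \log n\bigr] \leq \exp\!\left(-\frac{(7/9)^2}{2} \cdot 45 \eps \log n\right) = (\log n)^{-c}
\]
for some constant $c > 13$. From the calculation in the proof of Lemma~\ref{lem:assignment}, $\Pr[w\text{ is dangerous}] = O((\log n)^8/n)$. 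By the independence noted above, the per-vertex joint failure probability is therefore $O((\log n)^{-5}/n)$, and summing over the $n$ vertices gives $o(1)$. Hence the property holds a.a.s.\ for any fixed $C'$.

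The remaining subtlety is that the set $C'$ in the claim depends on the mapping colours $T$, which are determined by randomness revealed only after time $m_-$. I would handle this via deferred decisions: reveal first all the randomness needed to identify the dangerous set $\mathcal{D}$ (of size $O(\log^9 n)$ a.a.s.) at time $m_-$, and note that the out-edges of each $w \in \mathcal{D}$ remain independent of $w$'s in-edges (and hence of $w$'s dangerous status). Then, after later revealing $T$ and selecting $C' \subseteq W \setminus T$ restricted to regular colours, the Chernoff estimate applies to each dangerous $w$ individually, and a union bound over $\mathcal{D}$ (instead of over all $n$ vertices) gives total failure probability $O(\log^9 n \cdot (\log n)^{-13}) = o(1)$.

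The main obstacle is the indirect dependence of $T$ on the out-edges of the vertices in $\mathcal{D}$, which prevents a naive ``freeze then Chernoff'' argument. I would address this by exploiting Claim~\ref{claim:basic_facts}: since each single colour contributes at most $10$ to any $\deg^+_{\{c\}}(w)$, small perturbations in the choice of the matching (and hence of $T$) change each $\deg^+_{C'}(w)$ by only a bounded amount. Combined with the freedom in choosing $C' \subseteq W_{\mathrm{reg}} \setminus T$ (a set of size at least $45\eps n + 1$), a probabilistic-method or greedy argument then produces a $C'$ that simultaneously works for all $w \in \mathcal{D}$, matching the Chernoff estimate above up to negligible losses.
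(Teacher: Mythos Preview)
Your approach has a genuine gap, and the paper's proof takes a different route that cleanly avoids it.

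The issue is exactly the one you flag in your final paragraph: $C'$ is determined by the set $T$ of mapping colours, and $T$ is determined by the bipartite ``vertices--colours'' graph from the proof of Lemma~\ref{lem:assignment}. That bipartite graph depends on \emph{all} regular in-edges in the digraph, in particular on the out-edges of each dangerous vertex $w$ (since $w$'s out-edges are other vertices' in-edges). So once $C'$ is fixed you cannot treat $\deg^+_{C'}(w)$ as a fresh binomial. Your proposed patch---bounded per-colour degree from Claim~\ref{claim:basic_facts} plus the freedom in choosing $C'$---does not close this: the guaranteed slack is only one extra colour (there are at least $45\eps n+1$ regular non-mapping colours, of which you must pick $45\eps n$), which is nowhere near enough to decouple $C'$ from $w$'s out-edges. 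The ``probabilistic-method or greedy argument'' you allude to is not an argument.

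The paper's proof instead exploits the multi-round exposure set up in the overview: at this stage only the \emph{heads} and \emph{colours} of regular edges have been revealed, never the tails. Consequently the dangerous set, the good vertices, the matching $T$, and hence $C'$ are all measurable with respect to the revealed information, while the tails remain fresh randomness. Conditioning on everything revealed, for each good vertex $v$ one has
\[
\Prob\bigl[(w,v)\in E_{C'}\bigr]=\frac{\deg^-_{C'}(v)}{n-1}\ge \frac{42\eps\log n}{n},
\]
simply because the tails of the $\deg^-_{C'}(v)$ known in-edges to $v$ form a uniform random subset of $[n]\setminus\{v\}$. Hence $\deg^+_{C'}(w)$ stochastically dominates $\Bin\bigl(n(1-1/\log\log n),\,42\eps\log n/n\bigr)$, Chernoff gives failure probability $o(\log^{-9}n)$, and a union bound over the $O(\log^9 n)$ dangerous vertices finishes. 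The independence you should be using is not ``in-edges of $w$ vs.\ out-edges of $w$'' but rather ``heads and colours (revealed) vs.\ tails (unrevealed)''.
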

\begin{proof}
For any good vertex $v$ and any vertex $w$, we have that 
\[\Prob\sqbs{(w,v) \in E_{C'}} = \frac{\deg^-_{C'}(v)}{n-1} \ge \frac{42\eps\log n}{n},\]
where $E_{C'}$ represents the edges coloured with $C'$.
Thus we have that for every vertex $w$, $\deg^+_{C'}(w)$ stochastically dominates 
\[\Bin\sqbs{n\of{1-\frac{1}{ \log  \log n}} , \frac{42 \eps\log n}{n}},\]
with the expected value of $(42+o(1)) \eps \log n$.
So by Chernoff Bound,
$$
\Prob\sqbs{\deg^+_{C'}(w) < 10 \eps\log n} \le  \exp \of{ -\frac12 \cdot \bfrac{42-10}{42}^2 \cdot (42+o(1)) \eps \log n } = o(\log^{-9} n).
$$
Thus, a.a.s.\ each dangerous vertex has at least $10 \eps \log n$ many out neighbours since we again condition on the event that there are only $O(\log^{9} n)$ of them. 
\end{proof}

\bigskip

We will show that, if $S$ is not too small nor too large, there are many edges outgoing from $S$ with different heads and different colours. This will be enough to grow the tree to the desired size.

\begin{claim}\label{claim:S_expands}
Let $S$ be a set of vertices such that $|S| = \Omega(\eps\log n) = \Omega(\log \log n)$, $|S|\le n^{2/3}$, and let $T$ be a set of at least $n/2$ good vertices which is disjoint from $S$. Then with probability at least $1-\exp\of{-\Omega(\eps^2 \log^2n)}  = 1-\exp\of{-\Omega(\log^2 \log n)}$ the following holds: there is a set of $3|S| \eps \log n = 3 |S| \log \log n$ edges going from $S$ to $T$ whose head vertices are all distinct and whose colours are all distinct and in $C'$.
\end{claim}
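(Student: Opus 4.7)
The plan is to reformulate the desired structure as a matching in an auxiliary bipartite graph. Let $H$ be the bipartite graph with parts $T$ and $C'$ in which $(v,c) \in T \times C'$ is an edge iff some $s \in S$ has $(s,v)$ an edge of $\D(n,m_-)$ colored $c$. A matching of size $3|S|\eps\log n$ in $H$ is exactly a set of $3|S|\eps\log n$ edges from $S$ to $T$ with distinct heads and distinct colors in $C'$, so it suffices to exhibit one. The strategy combines a lower bound on $|E(H)|$ with an upper bound on $\Delta(H)$ and invokes the K\"onig consequence that any bipartite graph with $N$ edges and maximum degree at most $\Delta$ contains a matching of size at least $N/\Delta$ (minimum vertex cover equals maximum matching, and each vertex covers at most $\Delta$ edges).

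For the lower bound on $|E(H)|$, consider the triple count $X = \#\{(s,v,c)\in S\times T\times C' : (s,v) \text{ is an edge colored } c\}$. Since each $v \in T$ is good, $\deg^-_{C'}(v)\ge 43\eps\log n$, and conditional on $\deg^-_{C'}(v)=d$ the tails of these in-edges form a uniformly random size-$d$ subset of $V\setminus\{v\}$. Summing over $v \in T$, $\E[X] \ge |T|\cdot 43\eps\log n\cdot |S|/(n-1) \ge 21|S|\eps\log n$. A two-step Chernoff bound (the number of $S$-to-$T$ edges is hypergeometric; the further restriction to $C'$-coloured edges is binomial) gives $X\ge 12|S|\eps\log n$ with failure probability $\exp(-\Omega(|S|\eps\log n))$, and since $|S|=\Omega(\eps\log n)$ this is exactly $\exp(-\Omega(\eps^2\log^2 n))$. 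To upgrade $X$ to $|E(H)|$, write $X-|E(H)| = \sum_{v,c}(M_{v,c}-\mathbf{1}_{M_{v,c}\ge 1})\le \sum_{v,c}\binom{M_{v,c}}{2}$, where $M_{v,c}$ counts $s\in S$ with $(s,v)$ coloured $c$; a short calculation yields $\E[X-|E(H)|]=O(\eps|S|^2\log^2 n/n^2)=o(1)$ throughout our range of $|S|$, so by Markov we get $|E(H)|=X$ with probability $1-o(1)$.

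For the upper bound on $\Delta(H)$, $\deg_H(v)$ is bounded by the number of edges from $S$ to $v$ with colour in $C'$, and the probability that this is at least $5$ is at most $(|S|p')^5/5!$ with $p'\approx 45\eps\log n/n$. Since $|S|\le n^{2/3}$ gives $|S|p'\le 45\log\log n/n^{1/3}\to 0$, the expected number of $v\in T$ with $\deg_H(v)\ge 5$ is $O((\log\log n)^5/n^{2/3})=o(1)$, so Markov gives $\max_v\deg_H(v)\le 4$ with probability $1-o(1)$. The analogous bound for $\max_c\deg_H(c)$ follows from the same calculation with $|S||T|$ in place of $|S|$ and $p_-/|C|$ in place of $p'$.

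Putting these together, with probability at least $1-\exp(-\Omega(\eps^2\log^2 n))$ we have $|E(H)|\ge 12|S|\eps\log n$ and $\Delta(H)\le 4$, yielding a matching of size at least $12|S|\eps\log n/4=3|S|\eps\log n$, as required. The main technical obstacle is obtaining the Chernoff concentration for $X$ directly in $\D(n,m_-)$ with the precise failure probability $\exp(-\Omega(\eps^2\log^2 n))$ rather than the much weaker bound that one gets through a lossy application of Lemma~\ref{lem:gnp_to_gnm} (whose $\sqrt{m}$ factor swamps $\exp(\log^2\log n)$). A clean way around it is to condition on all edges with tails outside $S$, which determines goodness up to a negligible perturbation, and then exploit that the remaining edges with tails in $S$ form a uniform random sample with fixed count, admitting a direct hypergeometric Chernoff estimate.
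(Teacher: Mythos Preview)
Your approach is genuinely different from the paper's. The paper uses a direct two-round exposure: in the first round it reveals, for each potential edge, whether it is present and coloured in $C'$ (without revealing the specific colour), and shows via stochastic domination by a binomial that $|N^+_{C'}(S)\cap T|\ge 10|S|\eps\log n$ with failure probability $\exp(-\Omega(|S|\eps\log n))$. This immediately yields $10|S|\eps\log n$ edges from $S$ to $T$ with \emph{distinct heads}. In the second round it reveals the actual colours of these edges (uniform in $C'$) and shows that with probability $1-n^{-1/3+o(1)}$ no colour repeats four times among them, leaving at least $10|S|\eps\log n/3>3|S|\eps\log n$ edges with distinct heads and distinct colours. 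Your K\"onig-based reformulation is correct in principle and reaches the same target, but it introduces an auxiliary graph and three separate estimates ($|E(H)|$, $X-|E(H)|$, $\Delta(H)$) where the paper needs only two, and the paper never has to appeal to K\"onig because distinctness of heads is obtained for free in the first round.

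There is, however, a real wrinkle in your concentration step. Your description ``the number of $S$-to-$T$ edges is hypergeometric; the further restriction to $C'$-coloured edges is binomial'' does not match the conditioning in force: the set $T$ is defined through goodness, i.e.\ through $\deg^-_{C'}(v)$, so the $C'$-restriction cannot be treated as an independent binomial thinning afterwards. Your proposed fix---condition on all edges with tails outside $S$---does not resolve this either, since goodness depends on in-edges with tails in $S$ as well, and ``negligible perturbation'' is doing unjustified work. The clean route (implicit in the paper's first-round calculation) is to condition on the entire vector $(\deg^-_{C'}(v))_{v\in V}$; given this, the set of tails of $C'$-coloured in-edges of each $v$ is a uniform $d_v$-subset of $V\setminus\{v\}$, and these subsets are \emph{independent} across $v$ (the conditional law factorises over heads). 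Then $X=\sum_{v\in T}X_v$ is a sum of independent hypergeometrics and Chernoff applies directly with the required $\exp(-\Omega(|S|\eps\log n))$ bound. With this correction your argument goes through; the remaining Markov-based bounds on $X-|E(H)|$ and $\Delta(H)$ give failure probabilities of order $n^{-\Omega(1)}$, comfortably inside $\exp(-\Omega(\log^2\log n))$.
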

\begin{proof}
We use a two-round exposure. First we will expose whether some edges are present and in the colour set $C'$, without exposing exactly which colour they are. In the second round we take the edges we found in the first round and expose their colours, conditioning on the colours being in $C'$ of course. Let $v$ be any good vertex and set $d=\deg_{C'}^-(v)$. Suppose for now that $S$ is any set of vertices not including $v$ and let $|S|=s$. Then the probability that there is an edge from a vertex of $S$ to $v$ is
\begin{align*}
1-\frac{\binom{n-1 - s}{d}}{\binom{n-1}{d}} &= 1 - \frac{(n-1-s)_d}{(n-1)_d} \\
&= 1 - \prod_{i=1}^{d} \of{1-\frac{s}{n-i}}\\
&\ge 1-\of{1-\frac{s}{n}}^{d}\\
&\ge 1-\of{1-\frac{s}{n}}^{43 \eps \log n}.
\end{align*}
Now  suppose $S$ satisfies $s = \Omega(\eps\log n)$ and $s\le n^{2/3}$. Then $|N_{C'}^+(S) \cap T|$ (where $N_{C'}^+(S)$ represents the out-neighbours of vertices in $S$ coloured with colours from $C'$) stochastically dominates 
\[\Bin\sqbs{\frac n3, 1-\of{1-\frac{s}{n}}^{43\eps \log n}}.\]
Since $s\le n^{2/3}$, we have
\[ 1-\of{1-\frac{s}{n}}^{43\eps \log n} \sim \frac{43 \eps s \log n}{n}.\]
So by the Chernoff Bound, the probability that $|N_{C'}^+(S) \cap T|$ less than $10 s\eps\log n$ is at most
$\exp\of{-\Theta\of{s\eps\log n}} \le \exp\of{-\Omega(\eps^2 \log^2n)}.$

Now we do the second round of exposure for the proof of the claim. Conditioning on having $10s\eps \log n$ edges going from $S$ to $T$ with distinct heads and colours in $C'$, we reveal which colours the edges are to see if they are distinct. The probability that 4 of these edges get the same colour is at most
\[\binom{10s \eps \log n}{4} \cdot 45 \eps n \cdot \bfrac{1}{45 \eps n}^4 \le n^{(2/3) \cdot 4 + 1 - 4 +o(1)} =  n^{-1/3 + o(1)}\le \exp\of{-\Omega(\eps^2 \log^2n)},\]
where we have used the fact that $s \le n^{2/3}$.
We conclude that there are at least $3s\eps \log n$ distinct colours on these edges and the proof of the claim is finished.
\end{proof}

\bigskip

With these three claims, we can begin to grow our arborescence.   

\begin{lemma}
A.a.s.\ $\D(n,m_{\Zz})$ has a rainbow arborescence with $n^{2/3}$ many vertices, rooted at vertex $u$ which uses only colours from $C'$.
\end{lemma}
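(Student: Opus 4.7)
The plan is to build the arborescence greedily by growing it in stages from a candidate root vertex, using Claim~\ref{claim:S_expands} as the engine of expansion. Since $m_- \le m_{\Zz}$ a.a.s., by monotonicity it is enough to exhibit such an arborescence already in $\D(n,m_-)$. As $u$ must be a dangerous vertex and a.a.s.\ there are only $O(\log^{9} n)$ of these (as shown inside the proof of Lemma~\ref{lem:assignment}), a union bound reduces the task to showing that, for each fixed dangerous vertex $w$, with probability $1-o(\log^{-9}n)$ the graph $\D(n,m_-)$ contains a rainbow arborescence on $\lceil n^{2/3}\rceil$ vertices rooted at $w$ and using only colours from $C'$.

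First I would build a seed. By Claim~\ref{claim:pot_roots}, $\deg^+_{C'}(w)\ge 10\eps\log n$, and by Claim~\ref{claim:basic_facts}(iii) no colour appears on more than $10$ of those out-edges; so $w$ has at least $\eps\log n$ out-edges in $C'$ with pairwise distinct colours. These edges form a star-shaped rainbow arborescence $A_0$ of size $s_0 = \Theta(\log\log n)$ rooted at $w$.

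Then I would grow $A_i$ iteratively. Given $A_i$ with $s_i = |V(A_i)| \in [\log\log n, n^{2/3}]$, set $T_i$ to be the set of good vertices outside $V(A_i)$; Lemma~\ref{lem:no_bad_epsn} together with $s_i\le n^{2/3}$ guarantees $|T_i|\ge n - n/\log\log n - n^{2/3}\ge n/2$. Applying Claim~\ref{claim:S_expands} with $S = V(A_i)$ and $T = T_i$ yields $3 s_i\eps\log n$ edges from $V(A_i)$ into $T_i$ with pairwise distinct heads and pairwise distinct $C'$-colours. Since $A_i$ has only $s_i - 1$ edges, at most $s_i - 1$ of these new edges can clash colour-wise with $A_i$, leaving at least $2 s_i\eps\log n$ admissible edges. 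Attaching each such edge to its tail inside $A_i$ extends the arborescence to $A_{i+1}$ with $s_{i+1} \ge s_i(1 + 2\eps\log n)$. Iterating $L = O(\log n / \log\log\log n)$ times pushes $s_L$ past $n^{2/3}$, and pruning gives a sub-arborescence of exactly $\lceil n^{2/3}\rceil$ vertices.

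The main obstacle is justifying that Claim~\ref{claim:S_expands} can be applied iteratively, since $S = V(A_i)$ depends on randomness exposed in earlier stages. I would handle this by deferred decisions: at stage $i$ only re-examine edges whose presence and $C'$-membership were not revealed during stages $0,\ldots,i-1$. At each stage only $O(s_i\eps\log n)$ edges are actually queried, a vanishing fraction of the $\Theta(s_i n)$ edges between $V(A_i)$ and $V\setminus V(A_i)$, so the conditional distribution of the unseen edges is essentially unchanged and the proof of Claim~\ref{claim:S_expands} carries over verbatim on the untouched randomness. Summing the per-stage failure probability $\exp(-\Omega(\log^2\log n))$ over the $L$ stages and over the $O(\log^{9} n)$ candidate roots yields a total failure probability of $O(\log^{10} n)\exp(-\Omega(\log^2\log n)) = o(1)$, which is what is needed.
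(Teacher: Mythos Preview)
Your proposal is correct and follows essentially the same route as the paper: work at time $m_-$, union bound over the $O(\log^{9} n)$ dangerous vertices, seed with a star of $\eps\log n$ out-edges via Claims~\ref{claim:pot_roots} and~\ref{claim:basic_facts}(iii), then repeatedly invoke Claim~\ref{claim:S_expands} to get multiplicative growth by a factor $\Omega(\eps\log n)$ per round, finishing in $O(\log n/\log\log\log n)$ rounds and union bounding the $\exp(-\Omega(\log^2\log n))$ failure probabilities.

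There are two minor implementation differences worth noting. First, the paper takes $S$ to be only the most recent layer of the arborescence rather than all of $V(A_i)$; this is immaterial since both satisfy the size hypotheses of Claim~\ref{claim:S_expands}. Second, to guarantee that the new edges carry fresh colours, the paper preemptively moves into $B$ every vertex incident to an edge of an already-used colour (at most $O(n^{2/3}\log n)$ vertices by Claim~\ref{claim:basic_facts}(i)), so that Claim~\ref{claim:S_expands} automatically returns edges in unused colours; you instead accept the $3s_i\eps\log n$ edges and then discard the at most $s_i-1$ whose colour collides with $A_i$. Both arguments are valid. Your explicit acknowledgement of the dependency between rounds, handled by deferred decisions, is something the paper leaves implicit in its multi-round exposure framework; the paper's device of shrinking $T$ after each round is in part what makes the remaining randomness fresh.
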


\begin{proof}
As mentioned, we will actually show that this statement holds at time $m_-$ for all dangerous vertices, of which $u$ is one.  For each dangerous vertex $w$ we can do the following.  Let $B$ initially be the set of vertices which are not good, (that is, those which have $\deg_{C'}^{-}(v) < 43 \eps \log n$) so $|B| \le \frac{n}{\eps \log n}$ by Lemma~\ref{lem:no_bad_epsn}. As we proceed, we will want to ignore certain vertices and we will move them into $B$. Note that some of the dangerous vertices might be in $B$ (but that does not affect our argument).  Let $T$ be the remaining good vertices that are not dangerous.  By Claims~\ref{claim:basic_facts}(iii) and~\ref{claim:pot_roots}, $w$ has at least $\eps\log n$ many out-neighbours with distinct colours from $C'$. Let $C''$ be a  set of  precisely $\eps\log n$ of these colours and let $S$ be the corresponding out-neighbours. Move from $T$ to $B$ any vertex which is incident to an edge with a colour from $C''$. Then by Claim~\ref{claim:basic_facts}(i), we have removed at most $O(\log^2 n)$ vertices from $T$. We may now repeatedly apply Claim~\ref{claim:S_expands} to find edges from $S$ to $T$ with  new colours. After each application of Claim~\ref{claim:S_expands}, we replace $S$ by the set of heads of the newly found edges and we move from $T$ to $B$ all vertices incident to an edge with the same colour as any of the new edges. We stop when we have first accumulated at least $n^{2/3}$ vertices.  At most $O(n^{2/3}\log n) = o(n)$ many vertices are removed from $T$ and thus we always have $|T|\ge n/2$. Each application of Claim~\ref{claim:S_expands} multiplies the number of vertices in our arborescence by a factor of $\Omega\of{\eps \log n} = \Omega(\log \log n)$, so we apply it $O(\log n / \log \log \log n) = O(\log n)$ many times (to each of $O(\log^{9} n)$ dangerous vertices). We succeed a.a.s.\ by the union bound since Claim \ref{claim:S_expands} holds with probability $1 - \exp\of{-\Omega(\log^2 \log  n)}$  and
\[O(\log^{10} n \exp\of{-\Omega(\log^2 \log n)} ) = \exp\of{-\Omega(\log^2 \log n)} = o(1). \]
The proof of the lemma is finished. 
\end{proof}

\bigskip

Now we expose the  $n-1$ edges we reserved at the beginning, associated with mapping colours. In the digraph induced by these $n-1$ edges, every vertex has in-degree $1$ except for the root, which has in-degree $0$. Thus if we randomly put an extra edge pointing to the root, we have a ``loop-less random mapping'' (which is actually independent from the arborescence we have built so far).

We call the reader's attention to a few small but important points. First, a random mapping is ordinarily thought of as a digraph where each vertex chooses a random \emph{out-neighbour}. We have the reverse situation. Each vertex is choosing a random \emph{in-neighbour}. Second, a pure random mapping allows vertices to map to themselves, \emph{i.e.} allows self-loops. We have no loops in our situation. However, in a random mapping, the number of loops is asymptotically distributed as Poisson with mean 1. Thus with probability bounded away from 0, a random mapping has no loops and any properties which hold a.a.s.\ for random mappings also hold a.a.s.\ for loop-less random mappings.

We now consider the so called ``inverse epidemic process'' (See \cite{g77} or Section 14.5 in \cite{bol}). The process is as follows: suppose that $m$ members of a population are initially infected with a disease. The disease spreads to other vertices along the edges of a random mapping in the reverse orientation. Note that for us, since our mapping edges are already reversed, this means that the disease is spreading along our digraph edges in the forward direction.

We would like to apply a result of Burtin~\cite{burtin}. The result implies that if $x = n^{2/3}$  vertices are initially infected, and $\eta$ is the random variable for the number of eventually infected vertices, then $\bfrac{x}{n}^2 (n-\eta)$ is a.a.s.\ less than any function going to infinity with $n$, say $n^{1/6}$. So all but at most, say $n^{5/6}$ vertices total are eventually infected. We let our initially infected set, $A$, be the vertices in the arborescence we found of size $n^{2/3}$. The eventually infected vertices are in the same component as the root vertex. Denote the eventually infected vertices as $M_I$ and the uninfected vertices as $M_U$. First, we would like to show that a.a.s. $M_U$ contains no dangerous vertices.  Since our random mapping is independent of $A$, we have that any vertex in $V\setminus A$ is equally likely to be in $M_U$ and this probability is $|M_U| / |V\setminus A| = O\of{n^{5/6}/(n - n^{2/3})} = O(n^{-1/6})$. Thus the expected number of dangerous vertices in $M_U$ is $O(\log^9n\cdot n^{-1/6}) = o(1)$ and so a.a.s. $M_U$ contains no dangerous vertices.

Now, it is well known and easy to see that a.a.s.\ there are $O(\log n)$ many cycles in a random mapping. Here we consider double edges to be cycles and note that each component of the digraph induced by the mapping edges is unicyclic. We arbitrarily remove one edge from each cycle to get $O(\log n)$, many arborescence components, each with its own ``root''. Our goal is to show that we can connect the large component $M_I$ to the small arborescence components, whose vertices comprise $M_U$.

Note that  since there are  $O(\log^9 n)$ dangerous vertices, each incident to at most $10 \log n$ colours, there are at least $5\eps n - O(\log^{10} n)$ special, non-mapping colours. We will use these   colours to connect the big component to the small ones. Recall that so far we have exposed all regular edges, as well as the in-degrees in special edges of the dangerous vertices (and these are the only special in-degrees which have been revealed). 
Suppose $A_1$ is a small arborescence component (contained in $M_U$) and suppose its root is $u_1$. To connect the large component to $A_1$, we search for an edge whose tail is anywhere in $M_I$, whose head is $u_1$, and whose colour has not yet been used. Since no vertex of $M_U$ is dangerous, $u_1$'s special in-degree is unrevealed.  If we find a special edge from the large component to $u_1$, we add it to the arborescence (and discard that colour from future use).  We will attempt to do this for $O(\log n)$ many small components. Hence, at most $O(\log n)$ colours are discarded this way, and since each colour appears on at most $10 \log n$ edges, at most $O(\log^2 n)$ edges are discarded. A.a.s.\ there are at least $(5-o(1))\eps n \log n$ many edges in the set of eligible colours when we start exposing edges, so there are at least $4\eps n \log n$ edges to use each time. Also, by Claim \ref{claim:basic_facts}(ii) we may assume that no vertex (and so none of the roots) has in-degree more than $10 \log n$. So when we look for our random special edge going to a particular root, there are at most $10 \log n$ possibilities forbidden by our conditioning. Thus if there are $x$ many small components that remain to be connected to the giant, then the probability that any of them fail to connect is at most  
\[ x\of{1 - \frac{n-n^{5/6} - 10 \log n}{n(n-1)  }}^{4\eps n \log n} \le x\of{1-\frac{1}{2n} }^{4\eps n \log n} \le x \exp\left\{- 2 \eps \log n \right\} = \frac {x}{\log^2 n} =o(1), \]
since $x = O(\log n)$. This finishes the proof of Theorem~\ref{thm:main}.
\section{Conclusion and open questions}
Our result, Theorem \ref{thm:main}, is tight in one sense in that the hitting time is correct in terms of the structure of the digraph. It would be of interest to see precisely how many colours we need to choose from to get this result. Can we find the precise hitting time if we only have $n-1$ colours? Suppose we want a rainbow copy of some specific spanning tree, for example, a directed Hamilton path. How many random edges and colours are needed? How many edge disjoint rainbow spanning trees (respectively, arborescences) can we pack into $G_{n,m}$ (respectively, $\D_{n,m}$)?

\end{document}